\begin{document}

\newtheorem{theorem}{Theorem}
\newtheorem{lemma}{Lemma}
\newtheorem{corollary}{Corollary}
\newtheorem*{conjecture}{Conjecture}

\def\cA{\mathcal{A}}
\def\cJ{\mathcal{J}}
\def\cO{\mathcal{O}}
\def\cS{\mathcal{S}}
\def\cT{\mathcal{T}}
\def\C{\mathbb{C}}
\def\R{\mathbb{R}}
\def\Z{\mathbb{Z}}
\def\le{\leqslant}
\def\ge{\geqslant}
\def\({\left(}
\def\){\right)}

\newcommand{\comm}[1]{\marginpar{
\vskip-0.7\baselineskip
\raggedright\footnotesize
\itshape\hrule\smallskip#1\par\smallskip\hrule}}

\def\xxx{\vskip5pt\hrule\vskip5pt}
\def\imhere{\xxx\centerline{\sc I'm here!}\xxx}
\def\finishline{\xxx\centerline{\sc $\cdots\cdots$ finish line $\cdots\cdots$}\xxx}

\title{\sc Sums and products
\break with smooth numbers\footnote{MSC Numbers: 11B75, 11N25}}

\author{
{\sc William D.~Banks } \\
{Department of Mathematics}  \\
{University of Missouri} \\
{Columbia, MO 65211 USA} \\
{\tt BanksWD@missouri.edu }  \\
\\
{\sc  David J.~Covert} \\
{Department of Mathematics} \\
{University of Missouri} \\
{Columbia, MO 65211 USA} \\
 {\tt CovertDJ@missouri.edu} \\}

\date{\today}

\maketitle

\begin{abstract}
We estimate the sizes of the sumset $\cA+\cA$ and the productset
$\cA\cdot\cA$ in the special case that $\cA=\cS(x,y)$, the set of
positive integers $n\le x$ free of prime factors exceeding $y$.
\end{abstract}

\pagebreak

\section{Background}

For any nonempty subset $\cA$ of a ring, the {\it sumset} and {\it productset} of $\cA$ are defined as 
\[
\cA + \cA = \{a + a' : a, a' \in \cA\}
\qquad\text{and}\qquad
\cA \cdot \cA = \{a \cdot a' : a, a' \in \cA\},
\]
respectively.  A famous problem of Erd\H os and Szemer\'edi \cite{ES}
asks one to show that the sumset and productset of a finite set
of integers cannot both be small.

\begin{conjecture}
\text{\rm (Erd\H os--Szemer\'edi)}
For any fixed $\delta>0$ the lower bound
\[
\max\{|\cA+\cA|, |\cA \cdot \cA| \}\,\mathop{\gg}\limits_{\delta}\,|\cA|^{2 - \delta}
\]
holds for all finite sets $\cA\subset\Z$.
\end{conjecture}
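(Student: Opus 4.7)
The plan is to attack the conjecture through the \emph{multiplicative energy} framework, which has driven essentially all modern progress. Define $E^\times(\cA)=|\{(a,b,c,d)\in\cA^4:ab=cd\}|$. By Cauchy--Schwarz one has $|\cA\cdot\cA|\ge|\cA|^4/E^\times(\cA)$, so a lower bound on $|\cA\cdot\cA|$ reduces to an upper bound on $E^\times(\cA)$. First I would embed $\cA$ into $\R^2$ via the map $a\mapsto(a,1/a)$ (or sort $\cA$ by slope through the origin) and parameterize multiplicative quadruples by intersections of lines through the origin, so that bounding $E^\times(\cA)$ becomes an incidence problem between points of $(\cA+\cA)\times(\cA+\cA)$ (arising from pairwise sums along consecutive rays) and lines. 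Applying the Szemer\'edi--Trotter theorem in the style of Solymosi yields $E^\times(\cA)\log|\cA|\ll|\cA+\cA|^2$, which gives $\max\{|\cA+\cA|,|\cA\cdot\cA|\}\gg|\cA|^{4/3}(\log|\cA|)^{-1/3}$.

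To push past $4/3$ toward $2-\delta$, the next step I would take is to exploit higher-order energies $E_k^{\times}(\cA)$ and $E_k^{+}(\cA)$ in tandem with the Balog--Szemer\'edi--Gowers theorem. The idea is that if $|\cA+\cA|$ is near-linear in $|\cA|$ then BSG extracts a large subset $\cA'\subset\cA$ with small doubling, to which Freiman's theorem applies and forces $\cA'$ to lie in a short generalized arithmetic progression. One would then argue, using bounds on incidences between multiplicative cosets and GAPs (the Bourgain--Chang style rectification / sub-sum argument), that such a $\cA'$ cannot simultaneously have small multiplicative doubling, and iterate to transfer the gain back to $\cA$. The goal of the iteration is to eliminate the logarithmic and polynomial losses at every step so that the exponent approaches $2$.

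The main obstacle is that this conjecture is \emph{open}: no presently known technique reaches $2-\delta$ for arbitrary $\delta>0$, and the best unconditional exponent in $\Z$ stands well below $4/3+c$ for a small explicit $c$ (work of Konyagin--Shkredov, Rudnev--Shkredov, and others). The Szemer\'edi--Trotter step is tight on grid-like configurations and therefore cannot by itself be improved past $4/3$; the BSG/Freiman step loses polynomial factors in the doubling constant that accumulate in any iteration. A genuine proof of the full conjecture would require either a sum--product incidence inequality strictly stronger than Szemer\'edi--Trotter in this regime, or a quantitative structure theorem ruling out the coexistence of additive and multiplicative structure in $\Z$ with far tighter constants than Freiman's theorem supplies. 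Because these ingredients are not currently available, the most I expect to establish in full generality along these lines is a partial exponent; the paper's subsequent sections, as suggested by the abstract, presumably treat the tractable special case $\cA=\cS(x,y)$ rather than the general conjecture.
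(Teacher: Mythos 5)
You are right not to offer a ``proof'' here, and that is the key point: the statement you were given is the Erd\H os--Szemer\'edi conjecture itself, which the paper records in a conjecture environment purely as motivation and does not prove --- indeed it remains open, as you say. There is therefore no proof in the paper to compare your attempt against; the paper's actual contribution is to establish the Erd\H os--Szemer\'edi \emph{property} for the special collections $\cS(x,y)$ of smooth numbers (its Theorems 1 and 2), where the multiplicative rigidity of the sets is exploited via de Bruijn's and Ennola's estimates for $\Psi(x,y)$, Evertse's bound on $S$-unit equations (to show the sumset is large when $y\le c\log x$), and Solymosi's inequality $|\cA+\cA|^2|\cA\cdot\cA|\ge|\cA|^4/(4\lceil\log|\cA|\rceil)$ (to transfer productset information to sumset information for larger $y$). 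Your survey of the general problem is essentially accurate: the Elekes/Solymosi incidence--energy route does stop near exponent $4/3$ (with subsequent small improvements by Konyagin--Shkredov and others), and the BSG/Freiman losses you identify are a genuine obstruction to iterating up to $2-\delta$; your closing guess that the paper retreats to the tractable case $\cA=\cS(x,y)$ is exactly what happens. The only caution is procedural rather than mathematical: had you pushed the second paragraph's iteration scheme to a claimed conclusion, it would have been a gap, since no known rectification argument eliminates the polynomial losses you describe --- but you explicitly flag this, so your write-up is an honest assessment of an open problem rather than a flawed proof.
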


Erd\H os and Szemer\'edi \cite{ES} took the first step towards this conjecture
by showing that for some $\epsilon>0$, one has a lower bound of the form
\begin{equation}
\label{eq:SPineq}
\max\{ |\cA + \cA|, |\cA \cdot \cA| \} \ge c(\epsilon)\,|\cA|^{1 + \epsilon}
\end{equation}
for all finite sets $\cA\subset\Z$.
Nathanson \cite{Nath} gave the first explicit bound by showing that one can take
$\epsilon=\frac{1}{31}$ and $c(\epsilon) = 0.00028\cdots$ in this inequality, and later,
Ford~\cite{Ford} showed that $\epsilon=\frac{1}{15}$ is acceptable.
Establishing an important connection between the sum-product problem and geometric incidence theory, Elekes~\cite{Elekes} showed that one can take $\epsilon=\frac{1}{4}$
via a clever application of the the Szemer\'edi--Trotter incidence theorem
(which counts incidences between points and lines in the plane); moreover,
his argument readily extends to finite sets of real numbers.
Further improvements, including the best known bound to date, have been given by
Solymosi~\cite{Soly05, Soly09}; he has shown that~\eqref{eq:SPineq}
holds with any $\epsilon<\frac{1}{3}$ for all finite sets $\cA\subset\R$.

Although the Erd\H os--Szemer\'edi conjecture remains open, it is known
that the productset must be large whenever the sumset is sufficiently small.
In fact, Nathanson and Tenenbaum~\cite{NT} have shown that 
\begin{equation} \label{NaTen}
|\cA \cdot \cA| \ge \frac{c\,|\cA|^2}{\log |\cA|}\qquad \text{if}
\quad |\cA + \cA| \le 3 |\cA| - 4.
\end{equation}
The aforementioned best known bound to date,
given by Solymosi \cite{Soly09}, follows from his more general inequality
\begin{equation} \label{energy}
|\cA + \cA|^2 |\cA \cdot \cA| \ge \frac{|\cA|^4}{4 \lceil \log |\cA| \rceil}\,.
\end{equation}
Note that \eqref{energy} provides a
quantitive generalization of the Nathanson--Tenenbaum result~\eqref{NaTen}
(see also the results in~\cite{Elekes,ER,Soly05}); it implies that
$|\cA \cdot \cA| \ge |\cA|^{2 - \delta_{\epsilon}}$
whenever $|\cA+\cA| < |\cA| ^{1 + \epsilon}$, where $\delta_{\epsilon} \to 0$ as $\epsilon \to 0$.

In the opposite direction, Chang~\cite{Chang03} has shown that the sumset must be large
whenever the productset is sufficiently small.  More precisely, she has shown that
\begin{equation} \label{chang}
|\cA + \cA| > 36^{-\alpha} |\cA|^2 \qquad \text{if}
\qquad|\cA \cdot \cA| < \alpha|\cA|\quad\text{~for some constant~}\alpha.
\end{equation}

%\xxx
%\noindent In particular, this\comm{David, perhaps we should just take this part out} implies that when $|\cA \cdot \cA| < |\cA|^{1 + \epsilon}$, for some $\epsilon > 0$, then there exists a quantity $\delta(\epsilon) > 0$, so that $|\cA + \cA| > |\cA|^{2 - \delta_{\epsilon}}$, where $\delta_{\epsilon} \to 0$ as $\epsilon \to 0$.
%\xxx

A great deal of attention has also been given to the sum-product problem in other rings,
including (but not limited to) finite fields, polynomial rings, and matrix rings.
For a thorough account of the subject, we refer the reader to~\cite{Tao} and the references contained therein.

\section{Statement of results}
Let $\Omega$ be any infinite collection of finite sets within a given ring.
We shall say that $\Omega$ has the {\it Erd\H os--Szemer\'edi property} if
\[
\max\{|\cA + \cA| , |\cA \cdot \cA|\} = |\cA|^{2 + o(1)} \qquad \text{as}\quad
 |\cA| \to \infty \text{ with } \cA \in \Omega.
\]
Then, the Erd\H os--Szemer\'edi conjecture is the assertion that the collection
consisting of \emph{all} finite sets of integers has the Erd\H os--Szemer\'edi property.

In this paper, we study the Erd\H os--Szemer\'edi property with collections of sets of
{\it smooth numbers}, i.e., sets of the form
\[
\cS(x,y) = \{n \le x : P^+(n) \le y\} \qquad (x \ge y \ge 2),
\]
where $P^+(n)$ denotes the largest prime factor of an integer $n \ge 2$, and $P^+(1) = 1$.
These sets are well known in analytic number theory; for a background on integers free of large
prime factors, we refer the reader to \cite[Chapter III.5]{Ten} (see also the
survey~\cite{Gran}).

\begin{theorem} 
There is an absolute constant $c > 0$ for which the collection
\[
\Omega = \left\{ \cS(x,y) : 2 \le y \le c \log x \right\}
\]
has the Erd\H os--Szemer\'edi property.
\end{theorem}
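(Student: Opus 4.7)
The upper bound $\max\{|\cA+\cA|,|\cA\cdot\cA|\}\le|\cA|^{2}$ is trivial, so the task is to prove the matching lower bound $\max\{|\cA+\cA|,|\cA\cdot\cA|\}\ge|\cA|^{2-o(1)}$. Since $\cA\cdot\cA\subseteq\cS(x^{2},y)$, an easy lattice-point estimate (comparing simplex volumes) gives $|\cA\cdot\cA|\le 2^{\pi(y)}|\cA|$, which will turn out to be much smaller than $|\cA|^{2}$ in our regime. The productset is therefore small, and we must show $|\cA+\cA|\ge|\cA|^{2-o(1)}$.

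My plan is to bound the additive energy $E(\cA)=|\{(a,b,c,d)\in\cA^{4}:a+b=c+d\}|$ from above and invoke the Cauchy--Schwarz bound $|\cA+\cA|\ge|\cA|^{4}/E(\cA)$. The diagonal quadruples $\{a,b\}=\{c,d\}$ contribute $2|\cA|^{2}-|\cA|$ to $E(\cA)$, so the main task is to bound the non-trivial quadruples. Every non-trivial relation $a+b=c+d$ in $\cS(x,y)$ yields, after dividing by $d$, a non-degenerate solution $(a/d,b/d,-c/d)$ of the three-term unit equation $x_{1}+x_{2}+x_{3}=1$ in the group of $S$-units of $\mathbb{Q}$, with $S=\{p:p\le y\}$. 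By the Evertse--Schlickewei--Schmidt theorem, the number of non-degenerate projective solutions is at most $e^{C\pi(y)}$ for an absolute constant $C$; each such projective class lifts to at most $|\cA|$ integer quadruples via the scaling $d\in\cS(x,y)$. Hence the non-trivial contribution to $E(\cA)$ is at most $e^{C\pi(y)}|\cA|$.

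To complete the argument I need $e^{C\pi(y)}\le|\cA|^{1-\Omega(1)}$. The elementary lower bound
\[
|\cA|\ge\binom{\pi(y)+L}{\pi(y)}\ge\left(1+\frac{L}{\pi(y)}\right)^{\pi(y)},\qquad L=\lfloor\log x/\log y\rfloor,
\]
obtained by counting integers of the form $\prod_{p\le y}p^{a_{p}}$ with $\sum a_{p}\le L$, gives $\log|\cA|\ge\pi(y)\log(1+L/\pi(y))$. For $y\le c\log x$, the prime number theorem yields $L/\pi(y)\ge(1+o(1))/c$, hence $\log|\cA|\ge(1+o(1))\pi(y)\log(1/c)$. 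Fixing the absolute constant $c>0$ so small that $\log(1/c)>2C$ gives $e^{C\pi(y)}\le|\cA|^{1/2+o(1)}$, so the non-trivial contribution to $E(\cA)$ is at most $|\cA|^{3/2+o(1)}$, and $E(\cA)=(2+o(1))|\cA|^{2}$. Cauchy--Schwarz then yields $|\cA+\cA|\ge(1/2-o(1))|\cA|^{2}=|\cA|^{2-o(1)}$, completing the proof.

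The main obstacle is the appeal to the Evertse--Schlickewei--Schmidt theorem, a deep Diophantine result; the tuning of $c$ against its (enormous but absolute) constant is then routine. The remaining ingredients --- the lattice bound on $|\cA\cdot\cA|$, the simplex lower bound on $|\cA|$, and the counting of lifts --- are all elementary.
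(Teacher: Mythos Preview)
Your proof is correct and follows essentially the same route as the paper: bound the additive energy via Cauchy--Schwarz, separate the diagonal quadruples, convert each off-diagonal relation $a+b=c+d$ into a non-degenerate $S$-unit solution of $u_1+u_2+u_3=1$ with $S=\{p\le y\}$, invoke an Evertse-type bound $e^{C\pi(y)}$ on such solutions, and note that each solution lifts to at most $|\cA|$ quadruples. The only cosmetic differences are that the paper cites Evertse's 1995 bound (its Lemma~3) rather than Evertse--Schlickewei--Schmidt, and it obtains the needed lower bound $\log\Psi(x,y)\gg \pi(y)$ from de~Bruijn's asymptotic for $\Psi(x,y)$ instead of your elementary simplex count $\Psi(x,y)\ge\binom{\pi(y)+L}{\pi(y)}$; your version is in fact more self-contained.
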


\noindent {\sc Remark.} For smaller values of $y$ of size $o(\log x)$ we show that the productset of
$\cA = S(x,y)$ has size $|\cA|^{1 + o(1)}$ (see Theorem~\ref{prodsmall}), and thus
only the sumset is large in this region.

\begin{theorem}
Let $f$ be an arbitrary real-valued function such that $f(x) \to \infty$ as $x \to \infty$.  Then, the collection
\[
\Omega = \left\{ \cS(x,y) : f(x) \log x \le y \le x \right \}
\]
has the Erd\H os--Szemer\'edi property.
\end{theorem}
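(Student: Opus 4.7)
\medskip\noindent\textbf{Proof plan.}
The plan is to prove the stronger statement $|\cA\cdot\cA| = |\cA|^{2+o(1)}$ as $|\cA|\to\infty$ with $\cA = \cS(x,y)\in\Omega$; the Erd\H{o}s--Szemer\'edi property for $\Omega$ follows immediately. Note that the sumset plays no role here, since $\cA+\cA\subseteq[2,2x]$ forces $|\cA+\cA|\le 2x-1$, which is much smaller than $|\cA|^{2-o(1)}$ whenever $|\cA|$ is not of size comparable to $x$.

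For the first step I would invoke the classical divisor bound (Wigert): for every integer $N\le x^2$,
\[
\tau(N)\,\le\, 2^{(1+o(1))\log N/\log\log N}\,\le\, x^{o(1)},
\]
uniformly in $N$. Since every $N\in\cA\cdot\cA$ lies in $[1,x^2]$, the representation count $r(N) = \#\{(a,b)\in\cA\times\cA : ab = N\}$ is bounded by $\tau(N)$. Summing over $N$ gives
\[
|\cA|^2 = \sum_N r(N) \le |\cA\cdot\cA|\cdot x^{o(1)},
\]
so $|\cA\cdot\cA|\ge|\cA|^2\cdot x^{-o(1)}$.

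The crux is then to absorb the factor $x^{-o(1)}$ into $|\cA|^{-o(1)}$, i.e., to show that $\log|\cA|/(\log x/\log\log x)\to\infty$ as $x\to\infty$ with $\cA\in\Omega$. For this I would use the elementary estimate
\[
|\cA|\,\ge\,\binom{\pi(y)}{k}\,\ge\,\bigl(\pi(y)/k\bigr)^k\,\ge\,(f(x)/2)^{\lfloor\log x/\log y\rfloor},
\]
with $k=\lfloor\log x/\log y\rfloor$, obtained by counting squarefree $y$-smooth integers $\le x$ with exactly $k$ prime factors and using $\pi(y)\sim y/\log y$ together with $y\ge f(x)\log x$. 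Taking logarithms,
\[
\log|\cA|\,\ge\,\frac{\log x}{\log y}\bigl(\log f(x)+O(1)\bigr),
\]
and since $\log y\le 2\max(\log\log x,\log f(x))$, a short case analysis combined with $\log f(x)\to\infty$ gives the required growth.

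Combining the two steps yields $|\cA\cdot\cA|\ge|\cA|^{2-o(1)}$, matching the trivial $|\cA\cdot\cA|\le|\cA|^2$. The main obstacle is the second step at the bottom of the range $y\approx f(x)\log x$, where $|\cA|$ may itself be only $x^{o(1)}$: the hypothesis $f(x)\to\infty$ is used in an essential way to make $\log|\cA|$ outpace the divisor-function loss of order $\log x/\log\log x$.
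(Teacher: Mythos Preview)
Your divisor-bound strategy is sound and genuinely different from the paper's route. The paper splits into two regimes: when $(\log y)/\log\log x\to\infty$ it shows $\Psi(x,y)=x^{1+o(1)}$, so the sumset is $|\cA|^{1+o(1)}$, and then invokes Solymosi's inequality~\eqref{energy} to force $|\cA\cdot\cA|\ge|\cA|^{2-o(1)}$; when $\log y\asymp\log\log x$ it applies de~Bruijn's formula to $\Psi(x^2/y,y)$ and $\Psi(x^2,y)$ and uses the sandwich of Lemma~\ref{tech}. Your approach is more elementary in that it uses neither Solymosi nor Lemma~\ref{tech}, and the binomial count even sidesteps Lemma~\ref{debruijn}.

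There is, however, a real gap in your second step. The assertion ``$\log y\le 2\max(\log\log x,\log f(x))$'' is simply false on the full range $f(x)\log x\le y\le x$: take $y=x^{1/2}$, or $y=(\log x)^{100}$ with $f$ growing slowly. For such $y$ the exponent $k=\lfloor\log x/\log y\rfloor$ collapses to a bounded integer, and your degraded bound $(f(x)/2)^k$ yields only $\log|\cA|=O(\log f(x))$, which is far too small to absorb the divisor-function loss of order $\log x/\log\log x$. The error is in replacing $\pi(y)/k\sim y/\log x$ by its minimum value $f(x)$ over the whole range; for large $y$ this throws away almost everything. The repair is easy: either retain the sharper estimate $\log|\cA|\ge k\log(\pi(y)/k)\sim(\log x)(1-\log\log x/\log y)$, which does give $\log|\cA|\cdot(\log\log x)/\log x\to\infty$ uniformly once one uses $\log y\ge\log f(x)+\log\log x$, or handle $y\ge(\log x)^2$ separately by the trivial observation that $\Psi(x,y)=x^{1-o(1)}$ there. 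As written, though, your argument only covers the bottom of the range that you yourself flag as the main obstacle.
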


\noindent{\sc Remark.} For slightly larger values of $y$ exceeding $(\log x)^{f(x)}$ we show that the sumset of $\cA = \cS(x,y)$ has size $|\cA|^{1 + o(1)}$ (see Theorem~\ref{largesum}), and
hence only the productset is large in this region.

\bigskip

Since each set $\cS(x,y)$ is multiplicatively defined, it is quite difficult to estimate
the size of the sumset $\cS(x,y) + \cS(x,y)$ for values of $y$ close to $\log x$.
It is reasonable to expect that for every fixed $\kappa > 0$ one has
\[
|\cS(x,y) + \cS(x,y)| = |\cS(x,y)|^{2 + o(1)} \qquad (x \to \infty,~y = \kappa \log x).
\]
In view of \eqref{kprod}, the Erd\H os--Szemer\'edi conjecture implies that this
is true.  A partial result in this direction is provided by \eqref{ksum}.
We also expect that for any fixed $A > 1$ one has
\[
|\cS(x,y) + \cS(x,y)| = |\cS(x,y)|^{\beta_A + o(1)} \qquad \(x \to \infty,~y= (\log x)^A\)
\]
for some constant $\beta_A$ in the open interval $(1,2)$.  For
$A > 2$, a partial result in this direction is provided by Theorem~\ref{A}.

\

\noindent {\bf Acknowledgements.} The authors would like to thank Derrick Hart, Alex Iosevich, and Igor Shparlinski for helpful conversations.

\section{Preliminaries}

As before, we write
\[
\cS(x,y) = \left \{n \le x : P^+(n) \le y \right\} \qquad (x\ge y\ge 2),
\]
and we now set
\[
\Psi(x,y) = |\cS(x,y)| \qquad (x\ge y\ge 2).
\]

We also put
\[
G(t) = \log(1 + t) + t \log(1 + t^{-1}) \qquad (t > 0).
\]
From this definition we immediately derive the crude estimates
\begin{equation} \label{tlarge}
G(t) = \log t \left\{ 1 + O\( \frac{1}{\log t} \) \right\} \qquad ( t \ge 2)
\end{equation}
and
\begin{equation} \label{tsmall}
G(t) = t \log t^{-1} \left\{ 1 + O\( \frac{1}{\log t^{-1}} \) \right\} \qquad (0 < t \le 1/2).
\end{equation}

The following result is due to de Bruijn \cite{deB}.
\begin{lemma} \label{debruijn}
Uniformly for $x \ge y \ge 2$ we have
\[
\log \Psi(x,y) = \frac{\log x}{\log y}\,G\Big(\frac{y}{\log x} \Big) \left\{ 1 + O \( \frac{1}{\log y} + \frac{1}{\log \log 2x} \) \right\}.
\]
\end{lemma}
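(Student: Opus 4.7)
The natural approach is Rankin's method --- the saddle-point technique applied to the Dirichlet series over $y$-smooth integers. Set
\[
\zeta(s,y) = \prod_{p \le y}(1-p^{-s})^{-1},
\]
a finite Euler product converging for every $s > 0$. For any such $s$, the trivial bound $n^{-s} \ge x^{-s}$ for $n \le x$ gives
\[
\Psi(x,y) \,\le\, x^s \!\!\sum_{P^+(n) \le y}\! n^{-s} \,=\, x^s\,\zeta(s,y),
\]
so $\log \Psi(x,y) \le s \log x + \log \zeta(s,y)$. Minimizing the right-hand side in $s$ leads to the saddle-point equation
\[
\log x \,=\, -\frac{\zeta'(s,y)}{\zeta(s,y)} \,=\, \sum_{p \le y}\frac{\log p}{p^{s}-1},
\]
whose unique positive solution $\alpha = \alpha(x,y)$ is the optimal exponent.

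Next I would evaluate $\alpha \log x + \log \zeta(\alpha,y)$ in closed form. Using the prime number theorem (or direct summation when $y$ is small) together with partial summation, the sums $\sum_{p \le y}(\log p)/(p^{\alpha}-1)$ and $-\sum_{p \le y}\log(1-p^{-\alpha})$ can be approximated by integrals of the density $1/\log t$. A short computation reduces the saddle-point equation to the asymptotic $y^{1-\alpha} \approx 1 + y/\log x$, so
\[
1 - \alpha \,\approx\, \frac{\log(1 + y/\log x)}{\log y},
\]
and substituting this back into $\alpha \log x + \log \zeta(\alpha,y)$ collapses the two pieces to exactly $(\log x/\log y)\,G(y/\log x)$, the desired main term.

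For the matching lower bound I would use a direct combinatorial count: since $\cS(x,y)$ contains every product $\prod_{p\le y}p^{a_p}$ with $\sum_p a_p \log p \le \log x$, $\Psi(x,y)$ is bounded below by the number of lattice points in the resulting $\pi(y)$-dimensional simplex. A standard volume estimate, combined again with Mertens-type asymptotics for the primes up to $y$, recovers the same leading term. The multiplicative error $O(1/\log y + 1/\log\log 2x)$ then absorbs (i) prime-number-theorem remainders in the sums over $p$, (ii) boundary effects in the lattice-point count, and (iii) the Taylor remainder from approximating the Rankin bound near its minimum.

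The main obstacle is \emph{uniformity across the full range} $x \ge y \ge 2$. When $y$ greatly exceeds $\log x$, the parameter $t = y/\log x$ is large and $G(t) \sim \log t$ by \eqref{tlarge}, placing us in the classical Dickman--de Bruijn range $\Psi(x,y) \asymp x\rho(\log x/\log y)$; when $y \le \log x$, $t$ is small and $G(t) \sim t \log t^{-1}$ by \eqref{tsmall}, placing us in the simplex-volume regime. Matching these two regimes through the transitional window $y \asymp \log x$, while retaining the stated error and handling the degenerate case in which only finitely many primes participate (so that Mertens-type asymptotics become delicate), is the technical heart of the argument.
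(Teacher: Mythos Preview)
The paper does not prove this lemma at all: it is stated with the attribution ``The following result is due to de~Bruijn~\cite{deB}'' and then simply used as a black box. So there is no proof in the paper to compare your proposal against.

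That said, your sketch is the correct strategy and is essentially how de~Bruijn's theorem is actually proved (see \cite{deB} or \cite[Chapter~III.5]{Ten}). Rankin's upper bound with the saddle point $\alpha$ satisfying $\sum_{p\le y}(\log p)/(p^\alpha-1)=\log x$, combined with a matching lower bound from the lattice-point/simplex count, is exactly the standard route, and your identification of the uniformity issue across the transition $y\asymp\log x$ as the main technical difficulty is accurate. If you were writing this up in full you would need to be careful with the error terms in the prime sums when $y$ is bounded (the PNT gives nothing useful there, so one works directly with the finitely many primes), but your outline already flags this. For the purposes of this paper, however, citing the result suffices.
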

For smaller values of $y$, we need the following result of Ennola \cite{Enn}.
\begin{lemma} \label{ennola}
Uniformly for $2 \le y \le \sqrt{\log x \log \log x}$ we have
\[
\Psi(x,y) = \frac{1}{\pi(y)!} \prod_{p \le y} \frac{\log x}{\log p}
\left\{ 1 + O \( \frac{y^2}{\log x \log y} \) \right\},
\]
where $\pi(y) = |\{ p \le y \}|$.
\end{lemma}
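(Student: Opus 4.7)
The plan is to reduce $\Psi(x,y)$ to a lattice-point count in a simplex, and then approximate that count by a volume.

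Every $y$-smooth integer $n\ge 1$ has a unique factorization $n=\prod_{p\le y}p^{a_p}$ with $a_p\in\Z_{\ge 0}$, and the inequality $n\le x$ becomes $\sum_{p\le y}(\log p)\,a_p\le\log x$. Writing $k=\pi(y)$ and $\lambda_p=\log p$, this gives
\[
\Psi(x,y)\;=\;\#\Bigl\{(a_p)\in\Z_{\ge 0}^{k}:\sum_{p\le y}\lambda_p a_p\le\log x\Bigr\}.
\]
The continuous simplex $T=\{t\in\R_{\ge 0}^{k}:\sum_p\lambda_p t_p\le\log x\}$ has volume
\[
\mathrm{vol}(T)\;=\;\frac{(\log x)^{k}}{k!\prod_{p\le y}\log p},
\]
which is exactly the desired main term.

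Next I would sandwich the lattice count between two volumes. Assign to each $a\in\Z_{\ge 0}^{k}$ the half-open unit cube $C_a=\prod_p[a_p,a_p+1)$. If $a$ is a lattice point in $T$, then for any $t\in C_a$ one has $\sum_p\lambda_p t_p\le\sum_p\lambda_p a_p+\theta(y)\le\log x+\theta(y)$, so $C_a$ is contained in the enlarged simplex $T^{+}=\{t\ge 0:\sum_p\lambda_p t_p\le\log x+\theta(y)\}$. Since the $C_a$ are pairwise disjoint and of unit volume, $\Psi(x,y)\le\mathrm{vol}(T^{+})$. Conversely, for any $t\in T$ one has $\lfloor t\rfloor\in\Z_{\ge 0}^{k}$ and $\sum_p\lambda_p\lfloor t_p\rfloor\le\log x$, so $T\subset\bigcup_{a}C_a$, union over the lattice points in $T$; thus $\mathrm{vol}(T)\le\Psi(x,y)$. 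Combining,
\[
\mathrm{vol}(T)\;\le\;\Psi(x,y)\;\le\;\mathrm{vol}(T^{+})\;=\;\mathrm{vol}(T)\Bigl(1+\tfrac{\theta(y)}{\log x}\Bigr)^{k}.
\]
The last step is to control the ratio $(1+\theta(y)/\log x)^{k}$. By Chebyshev-type bounds one has $\theta(y)\ll y$ and $k=\pi(y)\ll y/\log y$, so
\[
\frac{k\,\theta(y)}{\log x}\;\ll\;\frac{y^{2}}{\log x\,\log y},
\]
which is bounded in the stated range $y\le\sqrt{\log x\,\log\log x}$. Using $(1+u)^{k}=1+O(ku)$ whenever $ku=O(1)$ yields a factor of $1+O(y^{2}/(\log x\,\log y))$, and the lemma follows.

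The main obstacle is the calibration in this final step: the quality of the sandwich is dictated by $\pi(y)\theta(y)/\log x\asymp y^{2}/(\log x\,\log y)$, so the hypothesis $y\le\sqrt{\log x\,\log\log x}$ is exactly what keeps this error factor $O(1)$ (and $o(1)$ throughout the bulk of the range), and is what legitimizes the linearization of $(1+u)^{k}$. Pushing beyond this range would require either a secondary-term analysis of the lattice count, or a contour-integral treatment of $\prod_{p\le y}(1-p^{-s})^{-1}$ near its pole of order $\pi(y)$ at $s=0$; I would not attempt such a refinement, since the stated range is all that is needed downstream.
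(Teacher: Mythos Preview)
The paper does not prove this lemma at all: it is stated as a known result of Ennola and cited to~\cite{Enn}, with no argument supplied. Your proposal is therefore not comparable to ``the paper's own proof,'' but it is a correct and self-contained derivation. The lattice-point-in-simplex reduction, the volume sandwich between $T$ and $T^{+}$, and the linearization $(1+u)^{k}=1+O(ku)$ under $ku=O(1)$ are all valid, and your identification of $k\,\theta(y)/\log x\ll y^{2}/(\log x\,\log y)$ via Chebyshev is exactly what produces the stated error term and explains the cutoff $y\le\sqrt{\log x\,\log\log x}$. This is in fact the classical route to Ennola's formula, so what you have written is essentially a sketch of the original argument rather than an alternative to it; the paper simply chose to quote the result instead of reproducing it.
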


For any finite set of primes $S$, let $\cO_S^*$ denote the group of $S$-units in $\mathbb{Q}^*$; that is,
\[
\cO_S^* = \{a/b \in \mathbb{Q}^* : p\mid ab \Rightarrow p \in S\}.
\]
The next statement is a special case of a more general result of Evertse on
solutions to $S$-unit equations (see \cite[Theorem 3]{Ev}).
\begin{lemma} \label{evertse}
Given $a_1 \dots a_n \in \mathbb{Q}^*$ and a finite set of primes $S$
of cardinality $|S| = s$, the $S$-unit equation
\[
a_1 u_1 + \dots + a_n u_n = 1 \qquad (u_1, \dots , u_n \in \cO_S^*)
\]
has at most $(2^{35}n^2)^{n^3 s}$ solutions $(u_1, \dots , u_n)$
with $\sum_{j \in \cJ} a_ju_j \neq 0$ for every nonempty subset $\cJ \subseteq \{1 , \dots , n\}$.
\end{lemma}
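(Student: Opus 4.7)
The statement is cited as a special case of a theorem of Evertse, and any proof must go through the substantial Diophantine machinery that underlies that work. My plan would be to follow the standard route via the quantitative $p$-adic Subspace Theorem of Schmidt--Schlickewei, together with an induction on $n$.

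The induction base $n=1$ is trivial since $a_1 u_1 = 1$ pins down $u_1$ uniquely. For the inductive step, assume the claimed bound for all equations in fewer variables. Given a nondegenerate solution $(u_1,\ldots,u_n)$, I would form the projective point $P(u) = [a_1 u_1 : \cdots : a_n u_n : 1] \in \mathbb{P}^n(\mathbb{Q})$, which lies on the hyperplane $H \colon x_1+\cdots+x_n - x_{n+1}=0$. At each place $v$ in $S' := S \cup \{\infty\}$, I would choose $n+1$ linearly independent linear forms $L_{v,i}$ adapted to $v$: forms picking out the dominant coordinate $v$-adically. Because each $a_j u_j$ is an $S$-unit scaled by a fixed rational, the product $\prod_{v,i} |L_{v,i}(P(u))|_v / H(P(u))^{n+1}$ is forced to be small, so Schlickewei's quantitative subspace theorem places $P(u)$ in one of a bounded, explicit number of proper linear subspaces of $\mathbb{Q}^{n+1}$.

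On each such subspace, the projective point $P(u)$ satisfies a nontrivial linear relation $\sum_{j\in\mathcal{J}} c_j a_j u_j = c_0$ with $\mathcal{J} \subseteq \{1,\ldots,n\}$. The nondegeneracy hypothesis (no vanishing subsum) prevents the relation from collapsing to a trivial identity, so after dividing through by one of the surviving terms I obtain an $S$-unit equation in strictly fewer than $n$ variables, to which the inductive bound applies. Summing the per-subspace contribution over the finitely many subspaces produced by the Subspace Theorem yields an explicit bound of the advertised shape.

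The main obstacle, and the reason this is a deep result rather than a short exercise, is pinning down the explicit constants: one needs Schlickewei's quantitative version of the $p$-adic Subspace Theorem with its explicit dependence on $n$ and $s=|S|$, and one must track these constants cleanly through the induction so that they combine to give the stated $(2^{35} n^2)^{n^3 s}$. Evertse's contribution is precisely this careful quantitative bookkeeping; optimizing the exponents in $n$ and $s$ is what forces most of the technical work in \cite{Ev}. For the present paper's purposes, any polynomial-in-$n$ base with a polynomial-in-$n$ factor times $s$ in the exponent would suffice, so the exact shape of the constant is convenient but not essential.
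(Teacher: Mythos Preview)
The paper does not prove this lemma at all: it simply records the statement as a special case of \cite[Theorem~3]{Ev} and uses it as a black box. Your sketch of the Subspace Theorem plus induction on $n$ is indeed the architecture of Evertse's argument, so in that sense your proposal is aligned with the literature rather than with anything in this paper. For the purposes of matching the paper, the ``correct'' proof is a one-line citation; your outline is a reasonable gloss on what lies behind that citation, and your closing remark that any bound of shape $C(n)^{P(n)s}$ would suffice here is exactly the right perspective for this application.
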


To get a better handle on productsets of smooth numbers, we shall apply
the following technical lemma.

\begin{lemma} \label{tech}
We have
\[
\Psi(x^2/y , y) \le |\cS(x,y) \cdot \cS(x,y) | \le \Psi(x^2, y) \qquad (x \ge y \ge 2).
\]
\end{lemma}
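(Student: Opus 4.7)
The plan is to handle the two inequalities separately, the upper one by inclusion and the lower one by a short greedy divisor argument.

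The upper bound is essentially tautological. Every element of $\cS(x,y)\cdot\cS(x,y)$ is of the form $a\cdot a'$ with $a,a'\le x$ and $P^+(a),P^+(a')\le y$; the product is therefore a $y$-smooth positive integer not exceeding $x^2$, which places it in $\cS(x^2,y)$. Hence $\cS(x,y)\cdot\cS(x,y)\subseteq\cS(x^2,y)$ and the right-hand inequality follows by taking cardinalities.

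For the lower bound I plan to prove the stronger set-theoretic inclusion $\cS(x^2/y,y)\subseteq \cS(x,y)\cdot\cS(x,y)$, after which the inequality $\Psi(x^2/y,y)\le|\cS(x,y)\cdot\cS(x,y)|$ is immediate. Given $n\in\cS(x^2/y,y)$, the construction is to let $a$ be the \emph{largest} divisor of $n$ satisfying $a\le x$ and to set $a'=n/a$. Since $a,a'$ both divide $n$, they inherit its $y$-smoothness, so the only non-trivial point is to verify $a'\le x$. If $a'=1$ this is free; otherwise pick any prime $p\mid a'$. Then $ap$ is a divisor of $n$ strictly larger than $a$, so the maximality of $a$ forces $ap>x$, i.e.\ $p>x/a$. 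But $n\in\cS(x^2/y,y)$ implies $p\le y$, so $a>x/y$, and therefore
\[
a' \;=\; \frac{n}{a} \;<\; \frac{x^2/y}{x/y} \;=\; x.
\]
This gives $a,a'\in\cS(x,y)$ and $n=a\cdot a'\in \cS(x,y)\cdot\cS(x,y)$, establishing the desired inclusion.

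There isn't really a significant obstacle here; the only substantive step is the maximality-of-$a$ argument, and it works cleanly because $y$-smoothness of $n$ bounds the ``granularity'' of its divisor lattice by $y$, which is exactly the slack factor appearing in $x^2/y$. I would write the proof in two short paragraphs corresponding to the two inequalities, with the lower bound phrased as the inclusion $\cS(x^2/y,y)\subseteq\cS(x,y)\cdot\cS(x,y)$ rather than a counting argument, since the inclusion is cleaner and immediately yields the required inequality on cardinalities.
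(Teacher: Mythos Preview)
Your proof is correct and follows essentially the same approach as the paper: both inequalities are handled via the inclusions $\cS(x,y)\cdot\cS(x,y)\subseteq\cS(x^2,y)$ and $\cS(x^2/y,y)\subseteq\cS(x,y)\cdot\cS(x,y)$, with the latter established by taking the largest divisor $a\mid n$ with $a\le x$ and using maximality together with $y$-smoothness to force $a>x/y$. The paper organizes the lower-bound argument into three explicit cases, whereas you fold them into a single streamlined step, but the underlying idea is identical.
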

\begin{proof}
It is easy to see that $\cS(x,y)\cdot\cS(x,y)\subseteq\cS(x^2,y)$, which yields
the second inequality. For the first inequality, it suffices to show that $\cS(x^2/y,y)$
is contained in the productset $\cS(x,y)\cdot\cS(x,y)$.
To this end, let $n\in\cS(x^2/y,y)$, and let $d$ be the largest divisor of $n$
that does not exceed~$x$. Note that $\max\{P^+(d),P^+(n/d)\}\le y$.
There are three possibilities for the number $d$:

\begin{itemize}
\item[$(i)$] $d>x/y$;

\item[$(ii)$] $d=n\le x/y$;

\item[$(iii)$] $d\le x/y$ and $d<n$.
\end{itemize}
In case $(i)$ we have $n/d\le x$, hence we can write $n=d\cdot(n/d)$ where $d$ and $n/d$ both lie in $\cS(x,y)$; this shows that $n\in\cS(x,y)\cdot\cS(x,y)$ as required.  In case $(ii)$ the number $n$ lies in the set $\cS(x/y,y)$, which is a subset of $\cS(x,y)\cdot\cS(x,y)$.  To finish the proof, we need only show that the case $(iii)$ is not possible. Indeed, suppose $d\le x/y$ and $d<n$, and let $p$ be any prime factor of $n/d$; then $p\le P^+(n/d)\le y$, $dp\mid n$, and $dp\le x$, which contradicts the maximal property of $d$.
\end{proof}

\section{Small values of $y$}

\begin{theorem} \label{smallsum}
There is an absolute constant $c>0$ such that the estimate
\[
|\cS(x,y) + \cS(x,y)| \sim \frac{1}{2} \Psi(x,y)^2 \qquad (x \to \infty)
\]
holds uniformly for $2 \le y \le c \log x$.
\end{theorem}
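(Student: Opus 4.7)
The plan is to prove matching upper and lower bounds, both of the form $\tfrac12|\cA|^2 + o(|\cA|^2)$, where $\cA = \cS(x,y)$. The upper bound $|\cA+\cA| \le \binom{|\cA|+1}{2} = \tfrac12\Psi(x,y)^2(1+o(1))$ is immediate since the sumset collects sums over unordered pairs with repetition. For the lower bound I will work with the additive energy
\[
E(\cA) := \#\{(a,b,c,d) \in \cA^4 : a + b = c + d\};
\]
Cauchy--Schwarz gives $|\cA+\cA| \ge |\cA|^4 / E(\cA)$, so it suffices to prove $E(\cA) \le (2+o(1))|\cA|^2$, uniformly for $y \le c\log x$.

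Split $E(\cA) = E_{\mathrm{triv}} + E_{\mathrm{nontriv}}$, where a quadruple is \emph{trivial} if $\{a,b\} = \{c,d\}$ as multisets; a direct count yields exactly $E_{\mathrm{triv}} = 2|\cA|^2 - |\cA|$, so the task reduces to showing $E_{\mathrm{nontriv}} = o(|\cA|^2)$ uniformly in the stated range of $y$.

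For a nontrivial quadruple one has $a \ne c$ (else $b = d$ would render it trivial) and $b \ne c$ (else $a = d$). Setting $S = \{p \text{ prime} : p \le y\}$, so that $|S| = \pi(y)$, and dividing $a+b=c+d$ by $d>0$ yields the $S$-unit equation
\[
u_1 + u_2 - u_3 = 1, \qquad u_1 = a/d,\ u_2 = b/d,\ u_3 = c/d \in \cO_S^*.
\]
All Evertse nondegeneracy conditions are easily verified (the single subsums are nonzero by positivity of $\cA$; the pairwise subsums $u_1 + u_2$, $u_1 - u_3$, $u_2 - u_3$ are nonzero thanks to positivity together with $a \ne c$ and $b \ne c$; the full sum equals $1$). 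Lemma~\ref{evertse} with $n=3$ then bounds the number of admissible triples $(u_1,u_2,u_3)$ by $C_0^{\pi(y)}$, where $C_0 = (2^{35}\cdot 9)^{27}$ is an absolute constant. Since each quadruple is determined by $(u_1,u_2,u_3)$ together with $d \in \cA$, I obtain $E_{\mathrm{nontriv}} \le \Psi(x,y) \cdot C_0^{\pi(y)}$.

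The main obstacle will be to show $C_0^{\pi(y)} = o(\Psi(x,y))$ uniformly in $2 \le y \le c\log x$, which is precisely where the restriction on $y$ must enter. For $y$ bounded this is trivial, since $\pi(y) = O(1)$ while $\Psi(x,y) \to \infty$. For $y \to \infty$ I would invoke Lemma~\ref{ennola} (when $y \le \sqrt{\log x \log\log x}$), and otherwise Lemma~\ref{debruijn} together with the small-$t$ estimate \eqref{tsmall} applied to $G(y/\log x)$ (valid because $t = y/\log x \le c \le 1/2$), to obtain
\[
\log \Psi(x,y) \ge \pi(y)\,\log\!\bigl(\log x / y\bigr)(1 - o(1)) \ge \pi(y)\,\log(1/c)(1 - o(1)).
\]
Choosing $c$ small enough, namely $c < 1/C_0$, makes the bracket $\log(1/c) - \log C_0$ strictly positive, forcing $C_0^{\pi(y)}/\Psi(x,y) \to 0$ at an exponential rate in $\pi(y)$ and closing the argument.
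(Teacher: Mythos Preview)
Your proof is correct and follows the paper's argument essentially step for step: the same Cauchy--Schwarz reduction to the additive energy, the same trivial/nontrivial split of quadruples (with the identical count $2|\cA|^2-|\cA|$ for the trivial part), the same application of Evertse's $S$-unit bound (Lemma~\ref{evertse}) after dividing through by the fourth coordinate, and the same endgame of showing $C_0^{\pi(y)}=o(\Psi(x,y))$ via Lemma~\ref{debruijn} and \eqref{tsmall}. The only minor slip is that when $y/\log x$ stays bounded away from $0$ (e.g.\ $y$ near $c\log x$) the error in \eqref{tsmall} is $O(1/\log(1/c))$ rather than $o(1)$, so your factor $(1-o(1))$ should be replaced by a fixed positive constant (the paper simply uses $\tfrac14$); this is harmless and only forces a slightly smaller choice of $c$ than your stated $1/C_0$, and your separate appeal to Lemma~\ref{ennola} is unnecessary since Lemma~\ref{debruijn} already covers the full range $y\ge 2$.
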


\begin{proof}
We have
\[
\Psi(x,y)^2=|\cS(x,y)|^2=\sum_{n\in
\cS(x,y)+\cS(x,y)}~\sum_{\substack{m_1,m_2\in\cS(x,y)\\m_1+m_2=n}}1.
\]
Using the Cauchy inequality it follows that
\[
\Psi(x,y)^4\le|\cS(x,y)+\cS(x,y)|\cdot|\cT|,
\]
where $\cT$ is the set of quadruples $(m_1,m_2,m_3,m_4)$ with
entries in $\cS(x,y)$ such that $m_1+m_2=m_3+m_4$.  It is easy to
see that there are precisely $2\,\Psi(x,y)^2-\Psi(x,y)$ quadruples
in $\cT$ for which $m_1=m_3$ or $m_1=m_4$.  Let $\cT^*$ be the set
of quadruples in $\cT$ with $m_1\ne m_3$ and $m_1\ne m_4$ (thus,
$m_2\ne m_3$ and $m_2\ne m_4$ as well).  If we put $a_1=a_2=1$ and
$a_3=-1$, the equation $m_1+m_2=m_3+m_4$ becomes
\begin{equation}
\label{eq:Suniteqn} a_1u_1+a_2u_2+a_3u_3=1,
\end{equation}
where
\begin{equation}
\label{eq:m/m} u_1=\frac{m_1}{m_4}\,,\qquad
u_2=\frac{m_2}{m_4} \text{ and } u_3=\frac{m_3}{m_4}\,.
\end{equation}
Let $S$ be the set of primes $p\le y$, and let $\cO_S^*$ be the
group of $S$-units in $\mathbb{Q}^*$. According to
Lemma~\ref{evertse}, there are at most
$(2^{35}\,9)^{27\pi(y)}$ solutions to the \text{$S$-unit}
equation~\eqref{eq:Suniteqn} with $u_j\in\cO_S^*$, $j=1,2,3$, and
$\sum_{j\in\cJ}a_ju_j\ne 0$ for each nonempty subset
$\cJ\subseteq\{1,2,3\}$.  On the other hand, for every fixed
solution $(u_1,u_2,u_3)$ to~\eqref{eq:Suniteqn} there are at most
$\Psi(x,y)$ quadruples $(m_1,m_2,m_3,m_4)$ in $\cT^*$ for
which~\eqref{eq:m/m} holds (since each choice of $m_4\in\cS(x,y)$
determines $m_1,m_2,m_3$ uniquely).  Putting everything together,
it follows that the bound
\[
\Psi(x,y)^4\le|\cS(x,y)+\cS(x,y)|
\cdot\(2\,\Psi(x,y)^2-\Psi(x,y)+\exp(c_1y/\log y)\Psi(x,y)\)
\]
holds with some absolute constant $c_1>0$. Taking into account the
trivial upper bound
\[
|\cS(x,y)+\cS(x,y)|\le\frac12\(\Psi(x,y)^2+\Psi(x,y)\),
\]
it suffices to show that there is an absolute constant $c>0$ such
that for all sufficiently large $x$, we have
\begin{equation}
\label{eq:expcylogy-est} \exp(c_1y/\log
y)\le\Psi(x,y)^{1/2}\qquad(2\le y\le c\log x).
\end{equation}

For every sufficiently large integer $N$, Lemma~\ref{debruijn}
implies that:
\[
\log\Psi(x,y)\ge\frac{1}{2}\,\frac{\log x}{\log
y}\,G\Big(\frac{y}{\log x}\Big)\qquad(x\ge y>N)
\]
if $x$ is sufficiently large. Let $N\ge 2$ be fixed with this
property.  For every sufficiently small constant $c>0$ we also
have by~\eqref{tsmall}:
\[
G(t)\ge\frac12\,t\,\log t^{-1}\qquad(0<t\le c).
\]
Let $0<c\le e^{-8c_1}$ be fixed with this property.  Combining the
two bounds, we see that
\[
\log\Psi(x,y)\ge\frac{\log(1/c)}{4}\,\frac{y}{\log y}\ge
2c_1\,\frac{y}{\log y}\qquad(N<y\le c\log x)
\]
if $x$ is large enough; this implies~\eqref{eq:expcylogy-est} in
the range $N<y\le c\log x$.  For the smaller values of $y$ in the
range $2\le y\le N$, we simply observe that $\exp(c_1y/\log
y)=O(1)$, whereas
\[
\Psi(x,y)\ge \Psi(x,2)=1+\left\lfloor {\frac{\log x}{\log
2}}\right\rfloor \to\infty \qquad\text{as~}x\to\infty.
\]
Hence, \eqref{eq:expcylogy-est} also holds for these values of $y$
if $x$ is sufficiently large.  This completes the proof.
\end{proof}

%\xxx
%\noindent {\sc Remark.} In\comm{I suggest we remove this as well} the range $y = o(\log x)$, Theorem~\ref{prodsmall} along with Chang's Theorem \eqref{chang} show immediately that $|\cS(x,y) + \cS(x,y)| = \Psi(x,y)^{2 + o(1)}$.  This is not the case, however, in the range $y \le c\log x, y \neq o(\log x)$.  For this reason, we provided the proof of Theorem~\ref{smallsum} for the entire range of $2 \le y \le c \log x$ for the sake of completeness.
%\xxx

\begin{theorem} \label{prodsmall}
Suppose that $y \ge 2$ and $y = o(\log x)$.  Then
\[
|\cS(x,y) \cdot \cS(x,y)| = \Psi(x,y)^{1 + o(1)}.
\]
\end{theorem}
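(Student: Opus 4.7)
The plan is to show that
\[
\Psi(x,y) \;\le\; |\cS(x,y)\cdot\cS(x,y)| \;\le\; \Psi(x^2, y),
\]
and that the two bounding values differ only by a factor of $\Psi(x,y)^{o(1)}$ when $y = o(\log x)$. The lower bound is immediate from $1 \in \cS(x,y)$, and the upper bound is the easy half of Lemma~\ref{tech}. Thus the theorem reduces to proving
\[
\log \Psi(x^2, y) = (1 + o(1)) \log \Psi(x, y) \qquad (x \to \infty,\ y = o(\log x)).
\]

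I would handle this in two ranges. For $2 \le y \le \sqrt{\log x \log \log x}$, Ennola's formula (Lemma~\ref{ennola}) applies to both $\Psi(x,y)$ and $\Psi(x^2,y)$ and yields
\[
\Psi(x^2, y) = 2^{\pi(y)}\,\Psi(x,y)\,(1 + o(1)),
\]
so that $\log \Psi(x^2, y) - \log \Psi(x, y) = \pi(y) \log 2 + o(1)$. On the other hand, Ennola together with Stirling yields a lower bound of the form $\log \Psi(x,y) \gg \pi(y) \log \log x$ in this range (since $\log \pi(y) \le \tfrac12 \log \log x + o(\log \log x)$), which forces $\pi(y) \log 2 = o(\log \Psi(x,y))$. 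In the complementary range $\sqrt{\log x \log \log x} < y = o(\log x)$, we have both $y \to \infty$ and $\log(\log x/y) \to \infty$, so the error terms in Lemma~\ref{debruijn} and in the small-$t$ expansion \eqref{tsmall} of $G$ are uniformly $o(1)$. This gives
\[
\log \Psi(x^N, y) = (1 + o(1))\,\frac{y \log(N \log x / y)}{\log y} \qquad (N = 1, 2),
\]
and hence $\log \Psi(x^2, y)/\log \Psi(x, y) = 1 + \log 2/\log(\log x/y) + o(1) \to 1$.

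The main technical point is the handoff between the two regimes: Ennola is required for small (even bounded) $y$, where the $1/\log y$ error term in de Bruijn's estimate is not negligible, while de Bruijn handles the larger range where the hypothesis of Lemma~\ref{ennola} fails. Together they deliver the uniform asymptotic $\log \Psi(x^2, y) = (1 + o(1)) \log \Psi(x, y)$ for $y = o(\log x)$, and combining this with the sandwich above completes the proof.
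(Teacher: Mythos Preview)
Your proposal is correct and follows essentially the same strategy as the paper: sandwich $|\cS(x,y)\cdot\cS(x,y)|$ between $\Psi(x,y)$ and $\Psi(x^2,y)$, then show $\log\Psi(x^2,y)=(1+o(1))\log\Psi(x,y)$ by splitting into a small-$y$ regime handled by Ennola's formula and a larger-$y$ regime handled by de~Bruijn's estimate with the expansion~\eqref{tsmall}. The only cosmetic differences are that the paper splits at $y=\sqrt{\log x}$ rather than at $y=\sqrt{\log x\,\log\log x}$, and that the paper invokes the lower bound from Lemma~\ref{tech} where you use the simpler observation $1\in\cS(x,y)$.
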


\begin{proof}
By Lemma~\ref{tech} we have
\[
\Psi(x,y)\le\Psi(x^2/y,y)\le
|\cS(x,y)\cdot\cS(x,y)|\le\Psi(x^2,y),
\]
hence it suffices to show that $\Psi(x^2,y)=\Psi(x,y)^{1+o(1)}$ as
$x\to\infty$.

First, suppose that $2\le y\le\sqrt{\log x}$. By
Lemma~\ref{ennola} we have
\[
\Psi(x,y)\sim\frac{1}{\pi(y)!}\prod_{p\le y}\frac{\log x}{\log p}
\qquad(x\to\infty)
\]
and
\[
\Psi(x^2,y)\sim\frac{1}{\pi(y)!}\prod_{p\le y}\frac{\log x^2}{\log
p}\sim 2^{\pi(y)}\Psi(x,y)\qquad(x\to\infty).
\]
Since the inequality $\pi(y)!\le y^{\pi(y)}$ implies
\[
\Psi(x,y)\ge(1+o(1))\(\frac{\log x}{y\log y}\)^{\pi(y)}
\ge(1+o(1))\(\frac{2\sqrt{\log x}}{\log\log x}\,\)^{\pi(y)},
\]
it follows that $2^{\pi(y)}=\Psi(x,y)^{o(1)}$; thus,
$\Psi(x^2,y)=\Psi(x,y)^{1+o(1)}$ as required.

Next, suppose that $y>\sqrt{\log x}$ and $y=o(\log x)$ as
$x\to\infty$. Using Lemma~\ref{debruijn} together
with~\eqref{tsmall} we see that the estimate
\[
\log\Psi(z,y)=\frac{y}{\log y}\,\log\Big(\frac{\log
z}{y}\Big)\left\{1+O\(\frac{1}{\log((\log x)/y)}\)\right\}
\]
holds uniformly for all $z$ in the range $x\le z\le x^2$. Applying
this estimate with $z=x$ and with $z=x^2$, we derive that
$\Psi(x^2,y)=\Psi(x,y)^{1+o(1)}$ in this case as well.
\end{proof}

\section{Large values of $y$}

For values of $y$ exceeding any fixed power of $\log x$, we have:

\begin{theorem}  \label{largesum}
Suppose that $(\log y)/ \log \log x \to \infty$.  Then,
\[
|\cS(x,y) + \cS(x,y)| = \Psi(x,y)^{1 + o(1)} \qquad (x \to \infty).
\]
\end{theorem}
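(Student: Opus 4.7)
The plan is to sandwich $|\cS(x,y)+\cS(x,y)|$ between two nearly matching estimates and deduce the conclusion by a direct computation with Lemma~\ref{debruijn}. The key observation is that in the regime $(\log y)/\log\log x\to\infty$, the smooth count $\Psi(x,y)$ already saturates $x$ on the logarithmic scale, so the trivial upper bound $|\cS(x,y)+\cS(x,y)|\le 2x$ is already essentially tight and no combinatorial input is needed.

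Concretely, I would first record the lower bound $|\cS(x,y)+\cS(x,y)|\ge\Psi(x,y)$, which holds because $1\in\cS(x,y)$ and hence the translate $\cS(x,y)+1$ is contained in $\cS(x,y)+\cS(x,y)$. Next, every element of $\cS(x,y)+\cS(x,y)$ is a positive integer not exceeding $2x$, giving $|\cS(x,y)+\cS(x,y)|\le 2x$. It therefore suffices to show that $x=\Psi(x,y)^{1+o(1)}$ as $x\to\infty$.

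To this end, set $t=y/\log x$. The hypothesis forces $\log y>2\log\log x$ for all sufficiently large $x$, whence $y>(\log x)^2$ and in particular $t\to\infty$. Consequently \eqref{tlarge} applies and yields $G(t)=(\log y-\log\log x)(1+o(1))$. Substituting into Lemma~\ref{debruijn} and noting that both $(\log y)^{-1}$ and $(\log\log 2x)^{-1}$ are $o(1)$, we obtain
\[
\log\Psi(x,y)\;=\;\frac{\log x}{\log y}\,(\log y-\log\log x)(1+o(1))\;=\;\(1-\frac{\log\log x}{\log y}\)\log x\,(1+o(1)).
\]
Since $(\log\log x)/\log y=o(1)$ by hypothesis, we conclude $\log\Psi(x,y)=(1+o(1))\log x$, i.e.\ $x=\Psi(x,y)^{1+o(1)}$. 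Combining the three steps gives $\Psi(x,y)\le|\cS(x,y)+\cS(x,y)|\le 2x=\Psi(x,y)^{1+o(1)}$, as required.

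I do not expect a serious obstacle here. The content of the argument is the verification that the $O(1/\log y+1/\log\log 2x)$ error in Lemma~\ref{debruijn} and the $O(1/\log t)$ error in~\eqref{tlarge} are simultaneously $o(1)$, and both of these reduce to $\log y\to\infty$ and $\log\log x\to\infty$, which are immediate from the hypothesis. In contrast to the other theorems in the paper, none of Lemma~\ref{evertse}, Lemma~\ref{ennola}, or Lemma~\ref{tech} is required; those heavier tools are reserved for the sumset problem at small $y$ and for the productset estimates.
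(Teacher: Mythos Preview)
Your argument is correct and follows essentially the same route as the paper: sandwich the sumset between $\Psi(x,y)$ and $2x$, then use Lemma~\ref{debruijn} together with~\eqref{tlarge} to show $\log\Psi(x,y)\sim\log x$ under the hypothesis $(\log\log x)/\log y\to 0$. You supply a bit more detail (the justification that $t=y/\log x\to\infty$ and the explicit use of $1\in\cS(x,y)$ for the lower bound), but the method is the same.
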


\begin{proof}
Using Lemma~\ref{debruijn}
and~\eqref{tlarge} we see that
\[
\log\Psi(x,y)\sim\frac{\log x}{\log y}\,G\Big(\frac{y}{\log
x}\Big)\sim\frac{\log x}{\log y}\,(\log y-\log\log x)\sim\log
x\qquad(x\to\infty),
\]
since $(\log \log x)/\log y \to 0$; that is,
\[
\Psi(x,y)=x^{1+o(1)}\qquad(x\to\infty).
\]
Using the trivial bounds
\[
\Psi(x,y)\le|\cS(x,y)+\cS(x,y)|\le 2x
\]
together with the previous estimate, we obtain the desired result.

\end{proof}

\begin{theorem} \label{largeprod}
Let $y / \log x \to \infty$.  Then,
\begin{equation}
\label{eq:stapler}
|\cS(x,y) \cdot \cS(x,y)| = \Psi(x,y)^{2 + o(1)} \qquad (x \to \infty).
\end{equation}
\end{theorem}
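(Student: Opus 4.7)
My approach has two main ingredients. For the \emph{upper bound}, I apply the right-hand inequality of Lemma~\ref{tech}, giving $|\cS(x,y)\cdot\cS(x,y)|\le\Psi(x^2,y)$. The hypothesis $y/\log x\to\infty$ forces $y/(2\log x)\to\infty$ as well, so Lemma~\ref{debruijn} together with~\eqref{tlarge} yields
\[
\log\Psi(x^2,y)\sim\frac{2\log x}{\log y}\,\log\frac{y}{2\log x}\sim\frac{2\log x}{\log y}\,\log\frac{y}{\log x}\sim 2\log\Psi(x,y),
\]
which delivers $\Psi(x^2,y)=\Psi(x,y)^{2+o(1)}$.

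For the \emph{lower bound}, I would split the range of~$y$ according to whether $\log y$ is small or comparable to $\log x$; a convenient dividing line is $\log y=\sqrt{\log x}$. In the regime $\log y\le\sqrt{\log x}$ we have $\log y=o(\log x)$, and the left-hand inequality of Lemma~\ref{tech} gives $|\cS(x,y)\cdot\cS(x,y)|\ge\Psi(x^2/y,y)$. Since $\log(x^2/y)\sim 2\log x$ and $y/\log(x^2/y)\sim y/(2\log x)\to\infty$, another application of Lemma~\ref{debruijn} and~\eqref{tlarge} produces $\log\Psi(x^2/y,y)\sim 2\log\Psi(x,y)$, as required.

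In the complementary regime $\log y>\sqrt{\log x}$ we have $\log\log x/\log y\to 0$, and so Lemma~\ref{debruijn} and~\eqref{tlarge} give
\[
\log\Psi(x,y)\sim\frac{\log x}{\log y}(\log y-\log\log x)\sim\log x,
\]
that is, $\Psi(x,y)=x^{1-o(1)}$. Here I would abandon Lemma~\ref{tech} and instead bound the multiplicative energy directly: writing $r(n)=\#\{(a,b)\in\cS(x,y)^2:ab=n\}$, we have $r(n)\le\tau(n)\le x^{o(1)}$ uniformly for $n\le x^2$, hence
\[
|\cS(x,y)\cdot\cS(x,y)|\ge\frac{\sum_n r(n)}{\max_n r(n)}=\frac{\Psi(x,y)^2}{x^{o(1)}}=\Psi(x,y)^{2-o(1)},
\]
where the last step uses $\Psi(x,y)=x^{1-o(1)}$.

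The main obstacle is that Lemma~\ref{tech} alone is \emph{not} sharp enough for the lower bound when $\log y$ is comparable to $\log x$: a direct computation shows $\Psi(x^2/y,y)$ is only of the order $\Psi(x,y)^{2-c}$ for some $c>0$, falling short of $\Psi(x,y)^{2-o(1)}$. The key observation that rescues the argument is that precisely in that regime $\Psi(x,y)$ is already essentially $x$, so the crude divisor-function bound $\tau(n)\le x^{o(1)}$ is strong enough to close the gap via the trivial multiplicative-energy inequality. Lemma~\ref{evertse} plays no role here, since the Evertse estimate is tailored to additive $S$-unit equations.
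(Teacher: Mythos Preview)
Your argument is correct, but it diverges from the paper's in the ``large $y$'' regime. The paper splits according to whether $(\log y)/\log\log x\to\infty$; in that case it invokes Theorem~\ref{largesum} to get $|\cS(x,y)+\cS(x,y)|=\Psi(x,y)^{1+o(1)}$ and then feeds this into Solymosi's inequality~\eqref{energy} to force $|\cS(x,y)\cdot\cS(x,y)|\ge\Psi(x,y)^{2+o(1)}$. In the complementary case $\log y\asymp\log\log x$ the paper uses Lemma~\ref{tech} exactly as you do. Your route is more elementary: instead of appealing to the deep sum--product bound~\eqref{energy}, you observe that once $(\log\log x)/\log y\to 0$ one has $\Psi(x,y)=x^{1+o(1)}$, and then the classical divisor estimate $\tau(n)\le x^{o(1)}$ for $n\le x^2$ bounds the multiplicity of each product and yields the lower bound directly. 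This trades an external black box for a self-contained counting argument; the price is that the paper's method would survive in settings where the divisor bound is unavailable, while yours would not. Two minor remarks: your upper-bound paragraph is unnecessary, since $|\cA\cdot\cA|\le|\cA|^2$ holds trivially; and your dividing line $\log y=\sqrt{\log x}$ is more generous than needed---any threshold with $(\log\log x)/\log y\to 0$ above it and $\log y=o(\log x)$ below it would do.
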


\begin{proof}
In the case that
$(\log y)/\log \log x \to \infty$, we can apply Theorem~\ref{largesum} 
together with~\eqref{energy} to obtain \eqref{eq:stapler} immediately.
Thus, we can assume that $\log y\asymp\log\log x$.  Since $y/\log x\to\infty$,
we derive from Lemma~\ref{debruijn} and \eqref{tlarge} the estimate
\begin{equation}
\label{eq:can_of_pens}
\log\Psi(x,y)=\frac{\log x}{\log y}\,\log\Big(\frac{y}{\log x}\Big)
\left\{1+o(1)\right\},
\end{equation}
whereas both $\log\Psi(x^2/y,y)$ and $\log\Psi(x^2,y)$ are of the size
$$
\frac{\log x}{\log y}\,\log\Big(\frac{y}{\log x}\Big)\{2+o(1)\}.
$$
Therefore,
$$
\Psi(x^2/y,y)=\Psi(x,y)^{2+o(1)}\qquad\text{and}\qquad
\Psi(x^2,y)=\Psi(x,y)^{2+o(1)},
$$
and the result follows from Lemma~\ref{tech}.
\end{proof}

\section{Intermediate values of $y$}

\begin{theorem} \label{k}
Suppose that $y = \kappa \log x$, where $\kappa > 0$ is fixed.  Then,
\begin{equation} \label{kprod}
 |\cS(x,y) \cdot \cS(x,y)| = \Psi(x,y)^{\alpha_k + o(1)}
\end{equation}
and
\begin{equation} \label{ksum}
|\cS(x,y) + \cS(x,y)| \ge \Psi(x,y)^{(4- \alpha_{\kappa})/2 + o(1)},
\end{equation}
where
\[
\alpha_{\kappa} = \frac{2 \log (1 + \kappa /2) + \kappa \log(1 + 2 / \kappa)}{\log(1 + \kappa) + \kappa \log(1 + 1 / k)}.
\]
\end{theorem}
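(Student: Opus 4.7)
The plan is to reduce \eqref{kprod} to an asymptotic comparison of $\Psi$-values via Lemma~\ref{tech}, and then deduce \eqref{ksum} from Solymosi's inequality \eqref{energy}. The crucial algebraic observation is that the exponent factors through $G$: expanding $G(\kappa/2)=\log(1+\kappa/2)+(\kappa/2)\log(1+2/\kappa)$ one sees that
\[
\alpha_\kappa \;=\; \frac{2\,G(\kappa/2)}{G(\kappa)}.
\]

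For \eqref{kprod}, I would substitute $y=\kappa\log x$ into Lemma~\ref{debruijn}. Since $y/\log x=\kappa$ is fixed and the error term $1/\log y+1/\log\log 2x$ tends to zero, one obtains
\[
\log\Psi(x,y)\;\sim\;\frac{\log x}{\log y}\,G(\kappa).
\]
Next I apply Lemma~\ref{debruijn} at $(x^2,y)$, where $y/\log(x^2)=\kappa/2$, and at $(x^2/y,y)$. Because $\log y=\log\log x+O(1)=o(\log x)$, we have $\log(x^2/y)=(2+o(1))\log x$ and $y/\log(x^2/y)\to\kappa/2$, so both of these are asymptotic to $(2\log x/\log y)\,G(\kappa/2)$. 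Taking the ratio against $\log\Psi(x,y)$ yields
\[
\Psi(x^2,y)=\Psi(x,y)^{\alpha_\kappa+o(1)} \qquad\text{and}\qquad \Psi(x^2/y,y)=\Psi(x,y)^{\alpha_\kappa+o(1)}.
\]
Sandwiching by Lemma~\ref{tech} then gives \eqref{kprod}.

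For \eqref{ksum}, I would feed \eqref{kprod} into the Solymosi bound \eqref{energy} applied to $\cA=\cS(x,y)$:
\[
|\cS(x,y)+\cS(x,y)|^2 \;\ge\; \frac{\Psi(x,y)^4}{4\lceil\log\Psi(x,y)\rceil\cdot|\cS(x,y)\cdot\cS(x,y)|} \;=\; \Psi(x,y)^{4-\alpha_\kappa+o(1)},
\]
where the factor $4\lceil\log\Psi(x,y)\rceil$ is absorbed into the $o(1)$ exponent because $\Psi(x,y)\to\infty$ with $x$. Taking square roots yields \eqref{ksum}.

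No genuine obstacle arises; the only care needed is in the bookkeeping for the error term in Lemma~\ref{debruijn}, where one must check that $\log y=\log\log x+\log\kappa\to\infty$ so that $1/\log y\to 0$, which is automatic. It is perhaps worth remarking that $\alpha_\kappa\in(1,2)$ for every $\kappa>0$ (with $\alpha_\kappa\to 1$ as $\kappa\to 0^+$ and $\alpha_\kappa\to 2$ as $\kappa\to\infty$), so that \eqref{ksum} gives a nontrivial lower bound $\Psi(x,y)^{1+\eta}$ for some $\eta>0$ depending on $\kappa$, consistent with the Erd\H{o}s--Szemer\'edi heuristic but falling short of the conjectured $\Psi(x,y)^{2+o(1)}$.
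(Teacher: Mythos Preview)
Your proposal is correct and follows essentially the same route as the paper: identify $\alpha_\kappa=2G(\kappa/2)/G(\kappa)$, apply Lemma~\ref{debruijn} at $x$, $x^2$, and $x^2/y$ (using $\log y=o(\log x)$ and continuity of $G$ for the last), sandwich via Lemma~\ref{tech} to get \eqref{kprod}, and then plug into Solymosi's inequality \eqref{energy} for \eqref{ksum}. The only cosmetic difference is that the paper writes the asymptotics with $\log x/\log\log x$ in place of your $\log x/\log y$, which is equivalent since $\log y\sim\log\log x$.
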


\noindent{\sc Remark.} For every positive real number $\kappa$ we have $1 < \alpha_{\kappa} < 2$.  Also, $\alpha_{\kappa} \to 1$ as $\kappa \to 0^+$ and $\alpha_{\kappa} \to 2$ as $\kappa \to \infty$.

\begin{proof}
First note that \eqref{ksum} follows from combining \eqref{kprod} and \eqref{energy}.  It remains to prove \eqref{kprod}.  By Lemma~\ref{debruijn} we have
\[
\log\Psi(x,y)=\big(G(\kappa)+o(1)\big)\,\frac{\log x}{\log\log
x}\qquad(x\to\infty)
\]
and
\[
\log\Psi(x^2,y)=\big(2\,G(\kappa/2)+o(1)\big)\,\frac{\log
x}{\log\log x}\qquad(x\to\infty),
\]
where the functions implied by $o(1)$ depend only on $\kappa$.
Since $G$ is continuous it is also easy to see that
\[
\log\Psi(x^2/y,y)=\big(2\,G(\kappa/2)+o(1)\big)\,\frac{\log
x}{\log\log x}\qquad(x\to\infty).
\]
Using Lemma~\ref{tech}, the above estimates, and the fact
that $\alpha_\kappa=2\,G(\kappa/2)/G(\kappa)$, the result follows.
\end{proof}

\begin{theorem} \label{A}
Suppose that $y \asymp (\log x)^A$, where $A>2$ is fixed.  Then,
\[
|\cS(x,y) + \cS(x,y)| \le \Psi(x,y)^{\frac{A}{A-1} + o(1)} \qquad (x \to \infty).
\]
\end{theorem}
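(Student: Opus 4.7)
My plan is to bypass Lemma~\ref{tech} and Evertse's theorem entirely and derive the bound from the completely trivial estimate $|\cS(x,y) + \cS(x,y)| \le 2\lfloor x\rfloor - 1$, which follows from $\cS(x,y) \subseteq \{1,2,\dots,\lfloor x\rfloor\}$.  The whole content of the theorem then reduces to a sharp asymptotic for $\log\Psi(x,y)$ in the regime $y\asymp(\log x)^A$ with $A>2$; the ratio $A/(A-1)$ must materialize as $\log(2x)/\log\Psi(x,y) + o(1)$.

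The key step is the evaluation of $\Psi(x,y)$.  Set $t = y/\log x$.  Since $A>1$, the hypothesis $y\asymp(\log x)^A$ gives $t\asymp(\log x)^{A-1}\to\infty$ as $x\to\infty$, so \eqref{tlarge} applies and yields
\[
G(t) = \log t \cdot \bigl\{1+o(1)\bigr\} = (A-1)\log\log x \cdot \bigl\{1+o(1)\bigr\}.
\]
Meanwhile $\log y = A\log\log x + O(1)$.  Substituting into Lemma~\ref{debruijn}, the error terms $1/\log y$ and $1/\log\log 2x$ are both $o(1)$, and we obtain
\[
\log\Psi(x,y) = \frac{\log x}{\log y}\,G(t)\bigl\{1+o(1)\bigr\} = \frac{A-1}{A}\log x \cdot \bigl\{1+o(1)\bigr\}.
\]
Equivalently, $\Psi(x,y) = x^{(A-1)/A + o(1)}$ as $x\to\infty$.

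To finish, I combine this with the trivial bound $|\cS(x,y) + \cS(x,y)|\le 2x = x^{1+o(1)}$ to get
\[
|\cS(x,y) + \cS(x,y)| \le x^{1+o(1)} = \Psi(x,y)^{A/(A-1) + o(1)},
\]
as required.  I do not foresee any real obstacle: the entire argument is a direct computation once Lemma~\ref{debruijn} and \eqref{tlarge} are in place.  The restriction $A>2$ plays no role in the derivation itself; it is only what makes the conclusion $A/(A-1)<2$ nontrivial, in the sense that it beats the Erd\H{o}s--Szemer\'edi target exponent $2$ and shows that in this range only the productset, not the sumset, can be as large as $\Psi(x,y)^{2+o(1)}$.
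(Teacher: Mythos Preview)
Your proposal is correct and follows essentially the same route as the paper: the paper also uses the trivial bound $|\cS(x,y)+\cS(x,y)|\le 2x$ together with the estimate $\Psi(x,y)=x^{(A-1)/A+o(1)}$, the latter obtained (via \eqref{eq:can_of_pens}) from exactly the combination of Lemma~\ref{debruijn} and \eqref{tlarge} that you write out explicitly. The paper likewise remarks that $A>2$ is needed only to make the exponent $A/(A-1)$ less than $2$, not for the derivation itself.
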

\begin{proof}
If $y\asymp(\log x)^A$ for some $A>1$,
then the estimate $\Psi(x,y)=x^{\frac{A-1}{A}+o(1)}$ follows immediately
from~\eqref{eq:can_of_pens}. Taking into account the trivial bound $|\cS(x,y)+\cS(x,y)|\le 2x$,
we obtain the stated result (which is nontrivial in the range $A>2$). 
\end{proof}

\end{document}